 \newcommand{\W}{W_{\phi,\psi}}
 \newcommand{\h}{\mathcal{H}}
 \newcommand{\C}{\mathbb{C}}
 \newcommand{\D}{\mathbb{D}}
\newcommand{\R}{ \mathbb{R}}
\renewcommand{\Im}{\operatorname{Im}}
\newcommand{\norm}[1]{\| #1 \|}
\renewcommand{\phi}{\varphi}
\theoremstyle{plain}
\newtheorem{Theorem}{Theorem}
\newtheorem{Proposition}[Theorem]{Proposition}
\theoremstyle{definition}
\newtheorem{Example}{Example}
\newtheorem{Question}{Question}
\numberwithin{Theorem}{section}
\begin{document}
\bibliographystyle{plain}

\title{Which weighted composition operators are complex symmetric?}
	\author{Stephan Ramon Garcia}
	\address{Department of Mathematics\\Pomona College\\Claremont, California\\91711 \\ USA}
	\email{Stephan.Garcia@pomona.edu}
	\urladdr{\url{http://pages.pomona.edu/~sg064747}}

	\author{Christopher Hammond}
	\address{
	Department of Mathematics\\
	Connecticut College\\
	270 Mohegan Avenue\\
	New London, CT 06320\\USA}
	\email{cnham@conncoll.edu}
	\urladdr{\url{http://math.conncoll.edu/faculty/chammond/}}

    \keywords{Complex symmetric operator,  conjugation, composition operator, weighted composition operator,
    hermitian operator, normal operator, self-map, Koenigs eigenfunction, disk automorphism, involution}
    \subjclass{47B33, 47B32, 47B99}

    \thanks{First author partially supported by National Science Foundation Grant DMS-1001614.}

	\begin{abstract}
	    	Recent work by several authors has revealed the existence of many unexpected classes of
		normal weighted composition operators.  On the other hand, it is known that
		every normal operator is a complex symmetric operator.  We therefore 
		undertake the study of complex symmetric weighted composition operators, identifying
		several new classes of such operators.  
	\end{abstract}

%%% ----------------------------------------------------------------------
\maketitle
%  \linenumbers
\thispagestyle{empty}

\section{Introduction}
	In 2010, C.~Cowen and E.~Ko obtained an explicit characterization and spectral description of 
	all hermitian weighted composition operators on the classical Hardy space $H^2$ \cite{CowenKo}.  
	This work was later extended to certain weighted Hardy spaces by 
	C.~Cowen, G.~Gunatillake, and E.~Ko \cite{CGK}.  Along similar lines, P.~Bourdon and S.~Narayan
	have recently studied normal weighted composition operators on $H^2$ \cite{BourdonNarayan}.  
	Taken together, these articles have established the existence of
	several unexpected families of normal weighted composition operators.  	

	It turns out that normal operators are the simplest examples of complex symmetric operators.
	We say that a bounded operator $T$ on a complex Hilbert space $\h$ is \emph{complex symmetric} 
	if there exists a  \emph{conjugation} (i.e., a conjugate-linear, isometric involution) $J$ such that $T = JT^*J$.  
	The general study of such operators was undertaken by the first author, M.~Putinar, and W.~Wogen, 
	in various combinations, in \cite{CSOA, CSO2, SNCSO, CSPI}.  
	A number of other authors have also made significant 
	contributions \cite{CFT, Gilbreath, ZLJ, JKLL, JKLL2,WXH,Tener,Zag,ZLZ}.  

	We consider here the problem of describing all complex symmetric
	weighted composition operators.  Among other results, we produce a class of complex symmetric weighted composition
	operators which includes the hermitian examples obtained in \cite{CowenKo,CGK} as 
	special cases.  We also raise a number of open questions which 
	we hope will spur further research.

\section{Observations and results}

	In what follows, we let $H^2(\beta)$ denote the weighted Hardy space which corresponds to the
	weight sequence $\{ \beta(n) \}_{n=0}^{\infty}$ \cite[Sect.~2.1]{cm}.  For each $w$ in the open unit disk 
	$\D$ and every integer $n \geq 0$, we let 
	$K_w^{(n)}$ denote the unique function in $H^2(\beta)$ which satisfies $\langle f, K_w^{(n)} \rangle = f^{(n)}(w)$ 
	for every $f$ in $H^2(\beta)$.  For convenience, we 
	often choose to write $K_w$ in place of $K_{w}^{(0)}$.		
	If $\phi:\D\to\D$ is analytic, then the \emph{composition operator} $C_{\phi}:H^2(\beta)\to H^2(\beta)$
	is defined by setting 
	\begin{equation*}
		C_{\phi}(f)=f\circ\phi.
	\end{equation*}
	Given another analytic function $\psi:\D\rightarrow\C$,
	we define the \emph{weighted composition operator} $\W$ by setting
	\begin{equation*}
		\W(f)=\psi\cdot(f\circ\phi).
	\end{equation*}
	Assuming that $\W$ is bounded, one has the useful formulae
	\begin{align}
		\W^{\ast}(K_{w}) &=\overline{\psi(w)}K_{\phi(w)}, \label{eq-KernelAdjoint} \\
		\W^{\ast}\bigl(K_{w}^{(1)}\bigr)
		&=\overline{\psi(w)}\,\overline{\phi^{\prime}(w)}
		K_{\phi(w)}^{(1)}+\overline{\psi^{\prime}(w)}K_{\phi(w)}. \label{eq-KernelDerivatives}
	\end{align}

\subsection{Composition operators}
	One initially expects few unweighted composition operators to be
	complex symmetric.  In fact, the only obvious candidates which come to mind are the normal
	composition operators.  These are precisely the operators $C_{\phi}:H^2(\beta)\to H^2(\beta)$ where
	$\phi(z) = az$ and $|a| \leq 1$ \cite[Thm.~8.2]{cm}.  	
	One might initially suspect that these are the \emph{only} complex symmetric composition operators.  
	This na\"{i}ve conjecture proves to be false, however, as there exist at least two other	
	basic families of complex symmetric composition operators.
	
	\begin{Proposition}\label{PropositionInvolutive}
		If $\phi$ is either (i) constant, or (ii) an involutive disk automorphism, 
		then $C_{\phi}: H^2(\beta) \to H^2(\beta)$ is a complex symmetric operator.
	\end{Proposition}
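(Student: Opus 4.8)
The plan is to treat the two cases separately, in each case by exhibiting an explicit conjugation $J$ on $H^2(\beta)$ for which $C_\phi = J C_\phi^* J$.

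For case (i), when $\phi \equiv b$ is constant, the operator $C_\phi$ sends $f$ to the constant function $f(b)$, so it is rank one: $C_\phi f = f(b)\cdot 1 = \langle f, K_b\rangle\, 1$. A rank-one operator of the form $\langle \cdot, u\rangle v$ is always complex symmetric — indeed one can build a conjugation fixing (suitable multiples of) $u$ and $v$ and use that rank-one operators are unitarily equivalent to scalar multiples of a fixed model. Concretely, I would observe that $C_\phi^* g = \langle g, 1\rangle K_b$, so both $C_\phi$ and $C_\phi^*$ have ranges contained in the two-dimensional space $\operatorname{span}\{1, K_b\}$, reducing the problem to a finite-dimensional one; then I would invoke the fact (from \cite{CSOA} or provable directly) that every operator on a finite-dimensional space which is zero off a small subspace — in particular every rank-one operator — is complex symmetric, and that a conjugation on the relevant finite-dimensional subspace extends to all of $H^2(\beta)$.

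For case (ii), let $\phi$ be an involutive disk automorphism, so $\phi\circ\phi = \operatorname{id}$ and hence $C_\phi^2 = I$; thus $C_\phi^{-1} = C_\phi$ and $C_\phi^* = C_{\phi}^{-*} $, while from $C_\phi^2=I$ we also get that $C_\phi$ is an involution. The natural guess is that $C_\phi$ is itself essentially the conjugation twisted by complex conjugation of coefficients: define $Jf = \overline{C_\phi \bar f}$, i.e. $(Jf)(z) = \overline{f(\overline{\phi(\bar z)})}$ — writing $\#$ for the coefficientwise-conjugation map $(\# f)(z) = \overline{f(\bar z)}$, set $J = C_\phi \# = \# C_{\phi^\#}$ where $\phi^\#(z) = \overline{\phi(\bar z)}$. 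One checks $\#$ is a conjugation on $H^2(\beta)$ (the weights $\beta(n)$ are real and positive), $\#^2 = I$, and $\# C_\phi \# = C_{\phi^\#}$; since $\phi^\#$ is again an involutive automorphism, $J^2 = C_\phi \# C_\phi \# = C_\phi C_{\phi^\#}$, and one must verify this is the identity, which will hold provided $\phi^\# = \phi^{-1} = \phi$, i.e. provided the automorphism $\phi$ has real Taylor coefficients — which is not automatic. So the genuine argument must instead use the general principle that $C_\phi^*$ is related to $C_\phi$ through $\#$ together with an Aleksandrov-type or adjoint formula; the cleanest route is: the adjoint of a composition operator by a disk automorphism is unitarily equivalent, via a weighted composition operator, to $C_{\phi^{-1}} = C_\phi$, and then one packages the intertwining unitary together with $\#$ into the desired conjugation $J$ and checks $J T^* J = T$ by testing on the reproducing kernels $K_w$ using formula \eqref{eq-KernelAdjoint}.

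The main obstacle I anticipate is case (ii): producing the right conjugation and verifying $J^2 = I$ and $JC_\phi^*J = C_\phi$ simultaneously. The subtlety is that $C_\phi$ for an automorphism is not self-adjoint, so $J$ cannot simply be a composition-type map; it must incorporate the coefficient-conjugation $\#$ and possibly a weight, and checking the involution property $J^2 = I$ requires exploiting that $\phi$ is its \emph{own} inverse (not merely an automorphism). I would carry out the verification by evaluating both $JC_\phi^*J K_w$ and $C_\phi K_w$ — using \eqref{eq-KernelAdjoint} and the known action of $\#$ on kernels, $\# K_w = K_{\bar w}$ — and confirming they agree for all $w\in\D$, which suffices since the kernels span a dense subspace.
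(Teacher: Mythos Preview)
Your treatment of case (i) is fine: $C_\phi$ is rank one, and rank-one operators are complex symmetric. This is a valid route, though not the one the paper takes.

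Case (ii), however, has a genuine gap. You set out to construct an explicit conjugation $J$ with $C_\phi = J C_\phi^* J$, notice that the natural candidate $J = C_\phi\,\#$ fails unless $\phi$ has real coefficients, and then sketch a plan involving an adjoint formula and an intertwining weighted composition operator --- but you never actually produce $J$ or verify the required identities. You yourself flag this as ``the main obstacle,'' and it is a real one: immediately after this proposition the paper poses exactly the problem of finding such an explicit $J$ as an open question (Question~1). So your strategy for (ii) amounts to trying to solve an open problem in order to prove a result that has a one-line proof by other means.

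The paper's argument is uniform and avoids constructing any conjugation at all. In case (i), $\phi\circ\phi = \phi$, so $C_\phi^2 = C_\phi$; in case (ii), $\phi\circ\phi = \mathrm{id}$, so $C_\phi^2 = I$. In both cases $C_\phi$ is annihilated by a polynomial of degree two, and the paper simply invokes the theorem from \cite{SNCSO} that every operator which is algebraic of degree at most two is complex symmetric. That abstract result guarantees the \emph{existence} of a conjugation without exhibiting one, which is precisely why Question~1 remains open even though the proposition is established.
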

	
	The preceding follows immediately from the fact that an operator which is
	algebraic of degree two is complex symmetric \cite[Thm.~2]{SNCSO}.  In what follows,
	we work only with nonconstant symbols $\phi$.  It turns out that (ii) prompts
	an elementary question whose answer has so far eluded us.    

	\begin{Question}
		Let $\phi$ be an involutive disk automorphism.
		Find an explicit conjugation $J:H^2(\beta)\to H^2(\beta)$ such that $C_{\phi} = JC_{\phi}^*J$.
	\end{Question}
	
	Naturally, one is also interested in determining whether there are any additional classes
	of complex symmetric composition operators.
	
	\begin{Question}
		Characterize all complex symmetric composition operators $C_{\phi}$ on the classical
		Hardy space $H^2$ or, more generally, on weighted Hardy spaces $H^2(\beta)$.
	\end{Question}
	
	In the negative direction, we have the following results.
	
	\begin{Proposition}
		If $C_{\phi}:H^2(\beta) \to H^2(\beta)$ is a hyponormal composition operator which
		is complex symmetric, then $\phi(z) = a z$ where $|a| \leq 1$.
	\end{Proposition}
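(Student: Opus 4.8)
The plan is to reduce to the normal case: I claim that a \emph{hyponormal} complex symmetric operator is automatically normal, after which the classification of normal composition operators recalled above---namely \cite[Thm.~8.2]{cm}---completes the proof.

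The main ingredient is an elementary norm identity valid for any complex symmetric operator. Suppose $T$ is bounded and $T = JT^{*}J$ for some conjugation $J$. Since $J$ is isometric, $\|Tx\| = \|JT^{*}Jx\| = \|T^{*}Jx\|$ for every $x$; because $J$ is a bijection with $J^{2} = I$, replacing $x$ by $Jx$ also yields $\|TJx\| = \|T^{*}x\|$. Now invoke hyponormality, i.e.\ $\|T^{*}y\| \le \|Ty\|$ for all $y$: applying this with $y = Jx$ and using the first identity gives $\|Tx\| = \|T^{*}Jx\| \le \|TJx\|$, and replacing $x$ by $Jx$ in this inequality reverses it, so $\|Tx\| = \|TJx\|$ for every $x$. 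Combining with $\|TJx\| = \|T^{*}x\|$ shows $\|Tx\| = \|T^{*}x\|$ for all $x$, hence $\langle(T^{*}T - TT^{*})x, x\rangle = 0$ for all $x$, and polarization forces $T^{*}T = TT^{*}$. Thus $T$ is normal.

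Applying this with $T = C_{\phi}$ shows that a complex symmetric hyponormal composition operator on $H^{2}(\beta)$ is normal, and by \cite[Thm.~8.2]{cm} this happens exactly when $\phi(z) = az$ with $|a| \le 1$. I do not anticipate a genuine obstacle here; the one point to watch is the bookkeeping of the substitutions $x \mapsto Jx$, which are legitimate precisely because $J$ is a bijective isometric involution. It is also worth remarking that the argument uses only the inequality $\|T^{*}x\| \le \|Tx\|$, so the same conclusion holds verbatim with ``hyponormal'' replaced by ``cohyponormal.''
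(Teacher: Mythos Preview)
Your proof is correct and follows essentially the same route as the paper: both arguments use the isometry of $J$ together with $T = JT^{*}J$ to interchange $\|Tx\|$ and $\|T^{*}Jx\|$, apply hyponormality, and conclude $\|Tx\| = \|T^{*}x\|$ for all $x$, whence $T$ is normal and \cite[Thm.~8.2]{cm} finishes. The paper's version compresses your chain into a single line of (in)equalities, but the content is identical.
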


	\begin{proof}
		Suppose $C_{\phi}$ is hyponormal; that is, $\norm{C_{\phi}f} \geq \norm{C^*_{\phi}f}$ for all $f$ in $H^2(\beta)$. 
		If $C_{\phi}$ is $J$-symmetric, then it follows that
		\begin{equation*}
			\norm{C^*_{\phi}f } = \norm{JC_{\phi} Jf} = \norm{C_{\phi} Jf} \geq \norm{C^*_{\phi} Jf} 
			= \norm{JC_{\phi}f} =\norm{C_{\phi}f}.
		\end{equation*}
		Thus $\norm{C_{\phi}f} = \norm{C^*_{\phi}f}$ for all $f$ in $H^2$
		whence $C_{\phi}$ is normal.  By \cite[Thm.~8.2]{cm} we conclude that $\phi(z) = az$
		where $|a| \leq 1$.
	\end{proof}

	\begin{Proposition}\label{Proposition-J1}
		Suppose that $C_{\phi}:H^2(\beta)\to H^2(\beta)$ is $J$-symmetric.  If $J(1)$ is a constant multiple of a
		kernel function $K_{w}$, then $\phi(w)=w$.  The converse holds whenever $\phi$ is not an automorphism.
	\end{Proposition}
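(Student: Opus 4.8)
The plan is to pivot everything off the trivial identity $C_\phi(1) = 1\circ\phi = 1$ together with the conjugation relation. First I would rewrite $C_\phi = JC_\phi^\ast J$ as $C_\phi^\ast = JC_\phi J$ (using $J^2 = I$) and deduce
\begin{equation*}
	C_\phi^\ast\bigl(J(1)\bigr) = JC_\phi J\bigl(J(1)\bigr) = J\bigl(C_\phi(1)\bigr) = J(1),
\end{equation*}
so that $J(1)$ is always a fixed vector of $C_\phi^\ast$, and a nonzero one since $J$ is isometric. I would also record the special case $C_\phi^\ast(K_w) = K_{\phi(w)}$ of \eqref{eq-KernelAdjoint} (take $\psi\equiv 1$), along with the elementary fact that $u\mapsto K_u$ is injective (pair both sides against the coordinate function $z$, which lies in $H^2(\beta)$).

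For the forward implication, if $J(1) = cK_w$ then $c\neq 0$, and the display above yields $C_\phi^\ast(K_w) = K_w$; comparing this with $C_\phi^\ast(K_w) = K_{\phi(w)}$ gives $K_{\phi(w)} = K_w$, hence $\phi(w) = w$. This part is purely formal.

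For the converse I would assume $\phi(w) = w$ with $\phi$ not an automorphism. Then $C_\phi^\ast(K_w) = K_{\phi(w)} = K_w$, so $K_w$ is a nonzero vector in $\ker(C_\phi^\ast - I)$; since $J(1)$ also lies in that subspace, it suffices to prove $\dim\ker(C_\phi^\ast - I) = 1$. Because $C_\phi^\ast = JC_\phi J$ with $J$ a conjugate-linear isometric involution, $J$ carries $\ker(C_\phi - I)$ bijectively onto $\ker(C_\phi^\ast - I)$, so I would instead show $\ker(C_\phi - I) = \C\cdot 1$. If $f\in H^2(\beta)$ and $f\circ\phi = f$, then $f\circ\phi_n = f$ for every iterate $\phi_n$ of $\phi$; since $\phi$ is a non-automorphism fixing the interior point $w$ we have $|\phi'(w)| < 1$ by Schwarz--Pick, and $\phi_n\to w$ locally uniformly by the Denjoy--Wolff theorem, forcing $f\equiv f(w)$. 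Hence $\ker(C_\phi - I)$, and therefore $\ker(C_\phi^\ast - I)$, is one-dimensional, and $J(1)$ must be a scalar multiple of $K_w$.

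The only step that is not bookkeeping is this last identification of the fixed space of $C_\phi$, and it is precisely there that the hypothesis that $\phi$ is not an automorphism is essential: for $\phi$ an elliptic automorphism of finite order — or for $\phi$ the identity — the equation $f\circ\phi = f$ admits nonconstant solutions, the fixed space of $C_\phi^\ast$ is strictly larger than $\C K_w$, and the converse genuinely fails. So I expect that to be the main (though still mild) obstacle; everything else is a short manipulation of the conjugation relation and \eqref{eq-KernelAdjoint}.
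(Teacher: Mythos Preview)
Your proof is correct and follows essentially the same route as the paper: both directions hinge on the identities $C_\phi(1)=1$ and $C_\phi^\ast K_w=K_{\phi(w)}$ together with the fact that, for a non-automorphism $\phi$, the only functions fixed by $C_\phi$ are the constants. The only cosmetic differences are that you phrase the converse as a dimension count on $\ker(C_\phi^\ast-I)$ (the paper instead applies $J$ to $K_w$ directly and observes $J(K_w)$ is constant), and that you supply a Denjoy--Wolff argument for the fixed-space lemma where the paper simply cites \cite[p.~90]{s}.
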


	\begin{proof}
		If $J(1)=\gamma K_{w}$ for some constant $\gamma\neq 0$ and $C_{\phi}$ is $J$-symmetric, then
		\begin{equation*}
			\gamma K_{w} = J(1) = JC_{\phi}(1)=C_{\phi}^{\ast}J(1) 
			=C_{\phi}^{\ast}(\gamma K_{w})=\gamma K_{\phi(w)},
		\end{equation*}
		from which we conclude that $\phi(w) = w$.		
		On the other hand, suppose that $\varphi(w)=w$.  Since $C_{\phi}^*(K_{w})=K_{\phi(w)}=K_{w}$, we see that
		\begin{equation*}
			C_{\phi}J(K_{w})=JC_{\phi}^*(K_{w})=J(K_{w}).
		\end{equation*}
		As long as $\phi$ is not an automorphism, the only eigenvectors for $C_{\phi}$ corresponding to the eigenvalue 
		$1$ are the constant functions \cite[p.~90]{s}.  Therefore $J(K_{w})$ must be a constant function, 
		which means that $J(1)$ must be a  scalar multiple of $K_{w}$.		
	\end{proof}
	
	In light of the preceding, we see that if $J$
	is a conjugation on $H^2(\beta)$ such that $J(1)$ is not a constant multiple of a kernel function,
	then there does not exist a $J$-symmetric composition operator $C_{\phi}$ on $H^2(\beta)$
	whose symbol fixes a point in $\D$.
	If $J(1)$ is a constant multiple of $1$, then we can say 
	even more about $\phi$.  The following
	is inspired by an unpublished result of P.~Bourdon and
	D.~Szajda \cite[Ex.~8.1.2]{cm}.
	
	\begin{Proposition}\label{mono}
		Suppose that $J:H^2(\beta)\to H^2(\beta)$ is a conjugation,
		$J(1)$ is a constant multiple of $1$, and $J(z)$ is a constant
		multiple of $z^{m}$ for some $m\geq 1$. If $C_{\phi}$ is
		$J$-symmetric, then $\phi(z)=az$ for some $|a|\leq 1$.
	\end{Proposition}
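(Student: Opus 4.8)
The plan is to extract information about $\phi$ by feeding the two functions $1$ and $z$ into the intertwining relations $JC_\phi=C_\phi^*J$ and $C_\phi J=JC_\phi^*$ (both equivalent to $J$-symmetry of $C_\phi$), and then to exploit the very rigid hypotheses $J(1)\in\C\cdot 1$ and $J(z)\in\C\cdot z^{m}$.

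First I would show that $\phi(0)=0$. Since $1$ is itself a constant multiple of the kernel function $K_0$, the hypothesis $J(1)\in\C\cdot 1$ says in particular that $J(1)$ is a constant multiple of $K_0$, so Proposition~\ref{Proposition-J1} applies directly and gives $\phi(0)=0$. (If one prefers a self-contained argument: writing $J(1)=c\cdot 1$ with $c\neq 0$ and using $C_\phi(1)=1$, the relation $JC_\phi=C_\phi^*J$ applied to $1$ gives $c\cdot 1=J(1)=C_\phi^*J(1)=c\,C_\phi^*(1)$, whence $C_\phi^*(1)=1$; testing against an arbitrary $f$ through $\langle f,C_\phi^*1\rangle=\langle C_\phi f,1\rangle$ forces $f(\phi(0))=f(0)$ for all $f$, i.e. $\phi(0)=0$.)

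Next, set $a=\phi'(0)$. Because $\phi(0)=0$ we have $\langle z,\phi^{k}\rangle=0$ for every $k\neq 1$, so a short computation — either directly from $\langle C_\phi^*z,z^{k}\rangle=\langle z,\phi^{k}\rangle$, or from \eqref{eq-KernelDerivatives} with $\psi\equiv 1$ at $w=0$ together with $z=\beta(1)^2K_0^{(1)}$ — gives $C_\phi^*(z)=\bar a\,z$. Now apply $C_\phi J=JC_\phi^*$ to the function $z$. Writing $J(z)=d\,z^{m}$ with $d\neq 0$, the left-hand side is $C_\phi(Jz)=d\,C_\phi(z^{m})=d\,\phi^{m}$, while the right-hand side is $J(C_\phi^*z)=J(\bar a\,z)=a\,J(z)=ad\,z^{m}$. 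Cancelling the nonzero constant $d$ yields the key identity $\phi^{m}=a\,z^{m}$.

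It then remains to conclude that $\phi$ is linear. If $a=0$ this forces $\phi\equiv 0$. If $a\neq 0$, then $g:=\phi/z$ extends analytically across the origin (as $\phi(0)=0$), is nowhere zero, and satisfies $g^{m}\equiv a$; a nonvanishing analytic function on the simply connected disk whose $m$-th power is a constant must itself be constant, so $\phi(z)=g(0)\,z$. In either case $\phi(z)=az$ for a scalar $a$, and $|a|\leq 1$ because $\phi$ maps $\D$ into $\D$ (Schwarz's lemma). The proof is short; the only points requiring care are bookkeeping ones — matching $1$ and $z$ with the appropriate multiples of $K_0$ and $K_0^{(1)}$ so that the adjoint formulas apply, tracking conjugates correctly since $J$ is conjugate-linear, and treating the degenerate case $a=0$ separately — and I do not anticipate a genuine obstacle beyond these.
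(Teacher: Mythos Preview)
Your argument is correct and follows essentially the same route as the paper's proof: invoke Proposition~\ref{Proposition-J1} to get $\phi(0)=0$, use \eqref{eq-KernelDerivatives} (or the equivalent inner-product computation) to see that $z$ is an eigenvector of $C_\phi^*$ with eigenvalue $\overline{\phi'(0)}$, transfer this via $J$-symmetry to $C_\phi(z^m)=\phi'(0)\,z^m$, and read off $\phi(z)=az$. You supply more detail than the paper does on the final step from $\phi^m=a\,z^m$ to $\phi(z)=az$ (including the degenerate case $a=0$), but the strategy is the same.
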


	\begin{proof}
		Since $1=\beta(0)K_{0}$, it follows from Proposition \ref{Proposition-J1} that $\phi(0)=0$, whence
		\begin{equation*}
			C_{\phi}^{\ast}\bigl(K_{0}^{(1)}\bigr)
			=\overline{\phi^{\prime}(0)}K^{(1)}_{\phi(0)}
			=\overline{\phi^{\prime}(0)}K^{(1)}_{0}
		\end{equation*}
		by \eqref{eq-KernelDerivatives}.
		Thus $z=\beta(1)K_{0}^{(1)}$ is an eigenvector for
		$C_{\phi}^{\ast}$ corresponding to the eigenvalue
		$\overline{\phi^{\prime}(0)}$. Since $C_{\phi}$ is $J$-symmetric,
		$z^{m}$ must be an eigenvector for $C_{\phi}$ corresponding to the eigenvalue
		$\phi^{\prime}(0)$. Observe that $C_{\phi}(z^{m})=\phi^{m}$, which
		means that $\phi(z)^{m}=\phi^{\prime}(0)z^{m}$. Consequently
		$\phi(z)=az$, where $|a|\leq 1$.
	\end{proof}

%The proof of Proposition \ref{mono} reveals an additional piece of
%information. Suppose that $J(1)=\gamma 1$ and $J(z)=\delta z^{m}$,
%and that $C_{\phi}$ is $J$-symmetric.  The fact that
%$\phi(z)^{m}=\phi^{\prime}(0)z^{m}$ implies that
%$\phi^{\prime}(0)^{m}=\phi^{\prime}(0)$.  Thus the only way for this
%situation to occur when $m\geq 2$ is for $a=\phi^{\prime}(0)$ to be
%an $(m-1)$st root of unity.  Otherwise $m$ must equal $1$.

%The main idea underlying Proposition \ref{mono} is that, whenever
%$\phi(w_{0})=w_{0}$, the function $K_{w_{0}}^{(1)}$ is an
%eigenvector for $C_{\phi}^{\ast}$ corresponding to the eigenvalue
%$\overline{\phi^{\prime}(w_{0})}$.  Hence, if $C_{\phi}$ is
%$J$-symmetric, $J\bigl(K_{w_{0}}^{(1)}\bigr)$ must be an eigenvector
%for $C_{\phi}$ corresponding to $\phi^{\prime}(w_{0})$. The next
%result can be considered an extension of this principle. For the
%sake of generality, we state this proposition in terms of weighted
%composition operators.

\subsection{Weighted composition operators}

	Although our list of complex symmetric composition operators is somewhat sparse,
	there are a variety of \emph{weighted} composition operators 
	which are known to be complex symmetric.  Indeed, the study of hermitian, normal, and unitary weighted composition operators
	has been the focus of intense research \cite{CowenKo, BourdonNarayan, CGK}.  The following 
	is a generalization of \cite[Lem.~2, Prop.~3]{BourdonNarayan}, where the same conclusion
	is obtained under the assumption that $\W$ is normal.

	\begin{Proposition}
		If $\W:H^2(\beta) \to H^2(\beta)$ is complex symmetric, then either
		$\psi$ is identically zero or $\psi$ is nonvanishing on $\D$.  Moreover,
		if $\phi$ is not a constant function and $\psi$ is not identically zero, then $\phi$ is univalent.
	\end{Proposition}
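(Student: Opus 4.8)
The plan is to derive both assertions from the adjoint formula \eqref{eq-KernelAdjoint} on reproducing kernels, together with the elementary fact that a complex symmetric operator is injective exactly when its adjoint is. Writing $\W = J\W^*J$ for a conjugation $J$, one checks directly that $\ker\W = J(\ker\W^*)$, so $\W$ and $\W^*$ have kernels of the same dimension; in particular one is injective if and only if the other is. This is the only structural input from complex symmetry that I expect to need.

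The first step is to show that if $\psi\not\equiv 0$ then $\W$ is injective; here I use the standing assumption that $\phi$ is nonconstant. If $\psi\cdot(f\circ\phi)$ is the zero element of $H^2(\beta)$, then, viewing it as an analytic function on $\D$ and dividing by $\psi$ off its discrete zero set, we get $f\circ\phi\equiv 0$; since $\phi(\D)$ is open by the open mapping theorem, $f$ vanishes on a nonempty open set, hence $f\equiv 0$. Thus $\W$, and therefore $\W^*$, is injective. The first assertion follows at once: by \eqref{eq-KernelAdjoint}, $\W^*K_w = \overline{\psi(w)}K_{\phi(w)}$, and since $K_w$ and $K_{\phi(w)}$ are both nonzero, injectivity of $\W^*$ forces $\psi(w)\neq 0$ for every $w\in\D$. (If instead $\psi\equiv 0$ there is nothing to prove.)

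For the ``moreover'' statement, suppose $\phi(a)=\phi(b)$ with $a\neq b$ in $\D$, and set $v = \overline{\psi(b)}K_a - \overline{\psi(a)}K_b$. Applying \eqref{eq-KernelAdjoint} to each term gives
\begin{equation*}
	\W^* v = \overline{\psi(a)\psi(b)}\bigl(K_{\phi(a)} - K_{\phi(b)}\bigr) = 0 .
\end{equation*}
Since $\psi$ is nonvanishing (by the first part) and the reproducing kernels $K_a, K_b$ at distinct points are linearly independent, we have $v\neq 0$, contradicting injectivity of $\W^*$. Hence $\phi$ is univalent.

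The proof is short, so there is no serious obstacle; the points that need care are the reliance on $\phi$ being nonconstant in the dichotomy for $\psi$ (for constant $\phi$ the operator $\W$ has rank one and is therefore complex symmetric regardless of whether $\psi$ vanishes, so the dichotomy genuinely fails), and the routine facts that $H^2(\beta)$ is a reproducing-kernel space of analytic functions on $\D$ with each $K_w\neq 0$ and with $K_a,K_b$ independent for $a\neq b$. The only mildly nonroutine step is selecting the combination $v$ so that it lands in $\ker\W^*$.
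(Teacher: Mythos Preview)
Your proof is correct and follows the same approach as the paper's: both use the equivalence of injectivity of $\W$ and $\W^*$ for complex symmetric operators, then apply \eqref{eq-KernelAdjoint} to force $\psi$ nonvanishing and to build a nonzero vector in $\ker\W^*$ from any failure of univalence. You supply details the paper leaves implicit---the reason $\ker\W=\{0\}$ when $\psi\not\equiv 0$, and the role of the standing assumption that $\phi$ is nonconstant in the first assertion.
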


	\begin{proof}
		Suppose that $\W$ is complex symmetric and that $\psi$ does not vanish identically.
		Since $\ker \W = \{0\}$, we conclude that $\ker \W^* = \{0\}$ by \cite[Prop.~1]{CSOA}.
		If $\psi(w)=0$ for some $w$ in $\D$, then $\W^{\ast}(K_{w})=0$ by \eqref{eq-KernelAdjoint}.
		Since this contradicts the fact that $\ker \W^*$ is trivial, we conclude that $\psi$ is nonvanishing on $\D$.
		Now suppose that there are points $w_{1}$ and $w_{2}$ in $\D$ such
		that $\phi(w_{1})=\phi(w_{2})$.  It follows that
		\begin{equation*}
			W^{\ast}\bigl(\overline{\psi(w_{2})}
			K_{w_{1}}-\overline{\psi(w_{1})}K_{w_{2}}\bigr)
			=\overline{\psi(w_{2})}\,\overline{\psi(w_{1})}K_{\phi(w_{1})}-
			\overline{\psi(w_{1})}\,\overline{\psi(w_{2})}K_{\phi(w_{2})}=0\text{.}
		\end{equation*}
		Since any distinct pair of reproducing kernel functions is linearly
		independent, we conclude that $w_{1}=w_{2}$.  In other words,
		$\phi$ is univalent.
	\end{proof}

	The following result provides a severe restriction on the spectrum of a complex symmetric weighted composition operator
	whose symbol has a fixed point in $\D$.

	\begin{Proposition}\label{PropositionEigen}
		Suppose that $\W:H^2(\beta)\to H^2(\beta)$ is a complex symmetric operator.
		If $\phi(w_{0})=w_{0}$ for some $w_{0}$ in $\D$, then
		$\psi(w_{0})\,\phi^{\prime}(w_{0})^{n}$ is an eigenvalue of $\W$
		for every integer $n\geq 0$.
	\end{Proposition}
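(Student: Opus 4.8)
The plan is to locate $\overline{\psi(w_{0})\,\phi^{\prime}(w_{0})^{n}}$ in the point spectrum of $\W^{\ast}$ and then transfer it to $\W$ using the conjugation. Since a complex symmetric operator is by definition bounded, the formulas \eqref{eq-KernelAdjoint}--\eqref{eq-KernelDerivatives} are available, and more is true: for each $k\geq 0$ one has an expansion $\W^{\ast}\bigl(K_{w}^{(k)}\bigr)=\sum_{j=0}^{k}\overline{a_{k,j}(w)}\,K_{\phi(w)}^{(j)}$. The first step is to pin down the leading term of this expansion. Starting from $\langle f,\W^{\ast}K_{w}^{(k)}\rangle=(\W f)^{(k)}(w)=(\psi\cdot(f\circ\phi))^{(k)}(w)$ and applying the Leibniz rule together with Fa\`a di Bruno's formula, one sees that $(\psi\cdot(f\circ\phi))^{(k)}(w)$ is a linear combination of $f^{(0)}(\phi(w)),\dots,f^{(k)}(\phi(w))$ whose coefficient of $f^{(k)}(\phi(w))$ is exactly $\psi(w)\,\phi^{\prime}(w)^{k}$ (the only contribution to $f^{(k)}$ comes from the term $\psi\cdot(f\circ\phi)^{(k)}$ of the Leibniz expansion, and within $(f\circ\phi)^{(k)}$ the leading Fa\`a di Bruno term is $(f^{(k)}\circ\phi)\,(\phi^{\prime})^{k}$). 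Dualizing, $\W^{\ast}\bigl(K_{w}^{(k)}\bigr)=\overline{\psi(w)\,\phi^{\prime}(w)^{k}}\,K_{\phi(w)}^{(k)}+(\text{linear combination of }K_{\phi(w)}^{(j)},\ j<k)$.

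Next I would specialize to $w=w_{0}$. Because $\phi(w_{0})=w_{0}$, the subspace $V_{k}:=\operatorname{span}\{K_{w_{0}}^{(0)},\dots,K_{w_{0}}^{(k)}\}$ satisfies $\W^{\ast}(V_{k})\subseteq V_{k}$, so $V_{0}\subset V_{1}\subset\cdots\subset V_{n}$ is a flag of finite-dimensional $\W^{\ast}$-invariant subspaces. The vectors $K_{w_{0}}^{(0)},\dots,K_{w_{0}}^{(n)}$ are linearly independent, being the Riesz representatives of the linearly independent functionals $f\mapsto f^{(j)}(w_{0})$ (test against $(z-w_{0})^{m}$ for $0\le m\le n$), so $\dim V_{n}=n+1$. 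With respect to this ordered basis the matrix of $\W^{\ast}|_{V_{n}}$ is upper triangular by the expansion above, with $k$-th diagonal entry $\overline{\psi(w_{0})\,\phi^{\prime}(w_{0})^{k}}$. Hence the numbers $\overline{\psi(w_{0})\,\phi^{\prime}(w_{0})^{k}}$, $0\le k\le n$, are precisely the eigenvalues of $\W^{\ast}|_{V_{n}}$; in particular $\overline{\psi(w_{0})\,\phi^{\prime}(w_{0})^{n}}$ is an eigenvalue of $\W^{\ast}|_{V_{n}}$, with eigenvector in $V_{n}$, and therefore an eigenvalue of $\W^{\ast}$.

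Finally I would invoke complex symmetry. Write $\W=J\W^{\ast}J$ for a conjugation $J$. If $\W^{\ast}v=\mu v$ with $v\neq 0$, then, since $J$ is a conjugate-linear involution and $J$ is isometric,
\[
\W(Jv)=J\W^{\ast}J(Jv)=J\W^{\ast}v=\overline{\mu}\,(Jv),\qquad Jv\neq 0,
\]
so $\overline{\mu}$ is an eigenvalue of $\W$ whenever $\mu$ is an eigenvalue of $\W^{\ast}$. Taking $\mu=\overline{\psi(w_{0})\,\phi^{\prime}(w_{0})^{n}}$ yields $\psi(w_{0})\,\phi^{\prime}(w_{0})^{n}\in\sigma_{p}(\W)$, which is the assertion.

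The only step requiring genuine care is the first one, namely isolating the leading coefficient $\psi(w)\,\phi^{\prime}(w)^{k}$ in the higher-order adjoint formula; everything afterward is elementary linear algebra together with the standard fact that a complex symmetric operator and its adjoint have conjugate point spectra. If one prefers to avoid an explicit Fa\`a di Bruno computation, one can instead observe directly that, since $\phi(w_{0})=w_{0}$, each $M_{k}:=\{f:\ f^{(j)}(w_{0})=0\text{ for }0\le j\le k\}$ is $\W$-invariant, so that $V_{k}=M_{k}^{\perp}$ is $\W^{\ast}$-invariant, and then read off the $k$-th diagonal entry from the single summand $\psi(w_{0})\,(f\circ\phi)^{(k)}(w_{0})$ that contributes $f^{(k)}(w_{0})$.
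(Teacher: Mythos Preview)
Your argument is correct and shares its backbone with the paper's: both exploit the fact that $\W^{\ast}$ acts on the flag $V_{0}\subset V_{1}\subset\cdots$ spanned by the derivative kernels $K_{w_{0}}^{(j)}$ in upper-triangular fashion with diagonal entries $\overline{\psi(w_{0})\,\phi'(w_{0})^{k}}$, and then transfer the resulting point spectrum to $\W$ via the conjugation (the paper cites \cite[Prop.~1]{CSOA} for this transfer, which is precisely your computation $\W(Jv)=\overline{\mu}\,Jv$).

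Where you diverge is in how the eigenvalues are extracted from the triangular action. The paper proceeds by induction, explicitly constructing an eigenvector $v_{k+1}$ for each new diagonal entry; this construction divides by $\overline{\psi(w_{0})}\bigl(\overline{\phi'(w_{0})^{k+1}}-\overline{\phi'(w_{0})^{j}}\bigr)$ and therefore forces a separate treatment of the case in which $\phi'(w_{0})$ is a root of unity. You bypass all of this by invoking the elementary linear-algebra fact that a finite upper-triangular matrix has its diagonal entries as eigenvalues, applied to $\W^{\ast}|_{V_{n}}$. Your route is shorter and handles uniformly the degenerate situations (say $\phi'(w_{0})=0$ or $\psi(w_{0})=0$) that make the paper's explicit construction delicate; the paper's version, in exchange, actually produces the eigenvectors, which may be of independent interest.
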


\begin{proof}
	Since $\W$ is complex symmetric, by \cite[Prop.~1]{CSOA} it suffices to prove that 
	\begin{equation}\label{eq-Ugly}
		\overline{\psi(w_{0})}\,\overline{\phi^{\prime}(w_{0})^{n}}
	\end{equation}
	is an eigenvalue for $\W^{\ast}$.  Let us first assume that  $\phi^{\prime}(w_{0})$ is not a root of unity. 
	We claim that for each $n\geq 0$, the function $K_{w_{0}}^{(n)}$ can be written in the form
	\begin{equation*}
		v_{n}+\alpha_{n-1}v_{n-1}+\alpha_{n-2}v_{n-2} +\cdots+\alpha_{0}v_{0},
	\end{equation*}
	where $v_{j}$ is an eigenvector for $\W$ corresponding to the eigenvalue 
	$\overline{\psi(w_{0})}\,\overline{\phi^{\prime}(w_{0})^{j}}$. We prove this assertion by induction.  Note that
	\begin{equation*}
		\W^{\ast}(K_{w_{0}})=\overline{\psi(w_{0})}K_{\phi(w_{0})}
		=\overline{\psi(w_{0})}K_{w_{0}},
	\end{equation*}
	so the claim holds when $n=0$. Suppose then that the claim holds for all $n\leq k$ and
	consider the kernel function $K_{w_{0}}^{(k+1)}$.  Now recall that $\W^{\ast}\bigl(K_{w_{0}}^{(k+1)}\bigr)$ equals
	$\overline{\psi(w_{0})}\,\overline{\phi^{\prime}(w_{0})^{k+1}}K_{\phi(w_{0})}^{(k+1)}$ plus a linear combination of
	kernel functions $K_{w_{0}}^{(j)}$ with $j\leq k$. Our induction hypothesis implies that each of these kernel functions is a linear
	combination of eigenvectors $v_{j}$. Therefore we may write
	\begin{equation*}
		\W^{\ast}\bigl(K_{w_{0}}^{(k+1)}\bigr)
		=\overline{\psi(w_{0})}\,\overline{\phi^{\prime}(w_{0})
		^{k+1}}K_{w_{0}}^{(k+1)}+
		\beta_{k}v_{k}+\beta_{k-1}v_{k-1}+\cdots+\beta_{0}v_{0}
	\end{equation*}
	for some constants $\beta_{0}$, $\beta_{1}$, \ldots, $\beta_{k}$.  Observe that the function
	\begin{equation*}
		v_{k+1}=K_{w_{0}}^{(k+1)}+\sum_{j=0}^{k}\frac{\beta_{j}}
		{\overline{\psi(w_{0})}\bigl(\overline{\phi^{\prime}(w_{0})^{k+1}}
		-\overline{\phi^{\prime}(w_{0})^{j}}\bigr)}\,v_{j}
	\end{equation*}
	is an eigenvector for $\W^{\ast}$ corresponding to the eigenvalue $\overline{\psi(w_{0})}\,\overline{\phi^{\prime}(w_{0})^{k+1}}$.
	Consequently our claim holds for all $n$.  In other words, every term \eqref{eq-Ugly} is
	an eigenvalue for $\W^{\ast}$.
	If $\phi^{\prime}(w_{0})$ is an $m$th root of unity, then a similar argument shows that
	\begin{equation*}
		K_{w_{0}}^{(n)}=v_{n}+\alpha_{n-1}v_{n-1}+\alpha_{n-2}v_{n-2} +\cdots+\alpha_{0}v_{0}
	\end{equation*}
	whenever $0\leq n\leq m-1$.  Hence \eqref{eq-Ugly} is
	an eigenvalue for $\W^{\ast}$ when $n\leq m-1$ and hence for all $n$.
	In either case, every number \eqref{eq-Ugly} is an eigenvalue for $\W^{\ast}$, which means that
	$\psi(w_{0})\,\phi^{\prime}(w_{0})^{n}$ is an eigenvalue for $\W$.
\end{proof}

\begin{Example}
	Fix $a \in \D \backslash\{0\}$ and let 
	\begin{equation*}
		\phi = \frac{a-z}{1-\overline{a}z}.
	\end{equation*}
	Since $\phi$ is an involutive automorphism, the composition operator $C_{\phi}:H^2(\beta) \to H^2(\beta)$ is complex
	symmetric by Proposition \ref{PropositionInvolutive}.  Moreover, observe that the spectrum
	$\sigma( C_{\phi})$ of $C_{\phi}$ is precisely $\{-1,1\}$.  On the other hand, 
	Proposition \ref{PropositionEigen} implies that $\phi'(w_0)^n$ belongs to $\sigma(C_{\phi})$ whenever $w_0$ is a fixed
	point of $w_0$. However, the only fixed point of $\phi$ which lies inside of $\D$ is 
	\begin{equation*}
		w_0 = \frac{1- \sqrt{1 -|a|^2}}{ \overline{a}},
	\end{equation*}
	which happens to satisfy $\phi'(w_0) = -1$, in accordance with Proposition \ref{PropositionEigen}.
\end{Example}

\subsection{Koenigs eigenfunctions}
	For any nonconstant non-automorphism $\phi:\D\rightarrow\D$ which has a fixed point $w_{0}$ in $\D$
	and for which $\phi'(w_0) \neq 0$, there is an analytic $\kappa:\D\rightarrow\C$ such that
	$\kappa\circ\phi=\phi^{\prime}(w_{0})\kappa$. This function,
	called the \textit{Koenigs eigenfunction} for $\phi$, is unique
	up to scalar multiplication \cite[p.~62, p.~93]{cm}.  Furthermore, $\kappa^{n}$ (or any
	constant multiple thereof) is the only analytic function for which
	$\kappa^{n}\circ\phi=\phi^{\prime}(w_{0})^{n}\kappa^{n}$.
	Proposition \ref{PropositionEigen}, together with the details of its proof,
	yields the following result pertaining to unweighted composition operators.

	\begin{Proposition}\label{PropositionKoenigs}
		Let $\phi:\D\rightarrow\D$ be an analytic selfmap which is not an automorphism and suppose that
		$\phi(w_{0})=w_{0}$ and $\phi'(w_0) \neq 0 $
		for some $w_{0}$ in $\D$.  If $C_{\phi}:H^2(\beta)\to H^2(\beta)$ is complex symmetric, then
		every power $\kappa^{n}$ of the Koenigs eigenfunction for $\phi$ belongs to $H^2(\beta)$.  
	\end{Proposition}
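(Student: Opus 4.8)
The plan is to mine the construction in the proof of Proposition \ref{PropositionEigen} for genuine eigenvectors of $C_\phi$ and then to invoke the uniqueness clause of Koenigs' theorem recalled above. Specialize Proposition \ref{PropositionEigen} to $\psi \equiv 1$, so that $\W = C_\phi$. Since $\phi$ is a non-automorphism with interior fixed point $w_0$, the Schwarz--Pick lemma gives $|\phi'(w_0)| < 1$, and combined with $\phi'(w_0) \neq 0$ this yields $0 < |\phi'(w_0)| < 1$; in particular $\phi'(w_0)$ is not a root of unity and the powers $\phi'(w_0)^0, \phi'(w_0)^1, \phi'(w_0)^2, \ldots$ are pairwise distinct. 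We are therefore in the first case treated in the proof of Proposition \ref{PropositionEigen}, and that argument furnishes, for each $n \geq 0$, a vector $v_n \in H^2(\beta)$ of the form $v_n = K_{w_0}^{(n)} + (\text{a linear combination of } K_{w_0}^{(0)}, \ldots, K_{w_0}^{(n-1)})$ with $C_\phi^*(v_n) = \overline{\phi'(w_0)^n}\, v_n$. Because the functionals $f \mapsto f^{(j)}(w_0)$ for $0 \leq j \leq n$ are linearly independent on $H^2(\beta)$, the derivative reproducing kernels $K_{w_0}^{(0)}, \ldots, K_{w_0}^{(n)}$ are linearly independent, so each $v_n$ is nonzero.

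Next I would transport these eigenvectors across the conjugation. Let $J$ be a conjugation with $C_\phi = JC_\phi^*J$. Applying $J$ to $C_\phi^*(v_n) = \overline{\phi'(w_0)^n}\,v_n$ and using $J^2 = I$ together with the conjugate-linearity of $J$ gives
\begin{equation*}
	C_\phi(Jv_n) = JC_\phi^*J(Jv_n) = JC_\phi^*(v_n) = \phi'(w_0)^n\,(Jv_n),
\end{equation*}
and $Jv_n \neq 0$ since $J$ is a bijection. Thus $f_n := Jv_n$ is a nonzero element of $H^2(\beta)$ --- hence a function analytic on $\D$ --- satisfying $f_n \circ \phi = \phi'(w_0)^n f_n$. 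By the uniqueness statement of Koenigs' theorem, $\kappa^n$ (up to a scalar) is the only analytic solution of $g \circ \phi = \phi'(w_0)^n g$, so $f_n$ must be a constant multiple of $\kappa^n$; therefore $\kappa^n \in H^2(\beta)$. The case $n = 0$ is trivial since $\kappa^0 = 1$.

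The sole delicate point --- and the only place the hypothesis $\phi'(w_0) \neq 0$ is really needed --- is making sure the inductive construction of the $v_n$ in Proposition \ref{PropositionEigen} does not degenerate: one needs the eigenvalues $\overline{\phi'(w_0)^j}$ to be distinct so that the denominators $\overline{\phi'(w_0)^{k+1}} - \overline{\phi'(w_0)^j}$ occurring there are nonzero, and this is exactly what $0 < |\phi'(w_0)| < 1$ guarantees. Everything else is bookkeeping plus the classical Koenigs uniqueness fact, so I do not anticipate any further obstacle.
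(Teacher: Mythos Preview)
Your proposal is correct and is exactly the argument the paper has in mind: the paper simply states that the result follows from Proposition~\ref{PropositionEigen} ``together with the details of its proof,'' and you have faithfully unpacked those details---constructing the eigenvectors $v_n$ of $C_\phi^\ast$, transporting them via $J$ to eigenvectors of $C_\phi$, and invoking Koenigs uniqueness. Your observation that $0<|\phi'(w_0)|<1$ (via Schwarz--Pick and the non-automorphism hypothesis) places us in the non-root-of-unity case, and your check that the $v_n$ are nonzero via linear independence of the $K_{w_0}^{(j)}$, are precisely the small verifications needed to make the paper's one-line justification rigorous.
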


	It is not difficult to construct a univalent map
	$\phi:\D\rightarrow\D$ in such a way that one can readily
	determine whether its Koenigs eigenfunction belongs to $H^2(\beta)$ \cite[pp.~93-94]{s}.  Let
	$\kappa:\D\rightarrow\C$ be a univalent function that vanishes at
	some point $w_{0}$ and consider the region $\kappa(\D)$.  Suppose
	that $\lambda\kappa(\D)\subseteq\kappa(\D)$ for some complex
	$\lambda$ with $|\lambda|<1$.  Define the map $\phi$ by
	$\phi(z)=\kappa^{-1}(\lambda\kappa(z))$.   Then, by construction,
	$\phi$ is a univalent self-map of $\D$ that fixes $w_{0}$ and
	whose Koenigs eigenfunction is $\kappa$.  Hence, by starting with a
	$\kappa$ that belongs to $H^2(\beta)$, we construct a $\phi$ whose
	Koenigs function belongs to $H^2(\beta)$.  Similarly, if we take $\kappa$
	such that $\kappa^{n}$ does not belong to $H^2(\beta)$ for some $n$, we
	obtain a map whose corresponding composition operator is not complex symmetric by 
	Proposition \ref{PropositionKoenigs}.  For example, consider any such $\lambda$ and
	take $\kappa(z)=2z/(1-z)$, which does not belong to the Hardy space
	$H^{2}$.  From this we obtain the map $\phi(z)=(\lambda z)/(1+(\lambda-1)z)$, which induces 
	a composition operator $C_{\phi}:H^2 \to H^2$ which is not complex symmetric.

	Much work has been done to determine the conditions under which a
	Koenigs eigenfunction $\kappa$ belongs to the Hardy space $H^{2}$.
	In this context, Proposition \ref{PropositionKoenigs} is equivalent to saying
	that $\kappa$ belongs to $H^{p}$ for every $0<p<\infty$. The
	following proposition follows directly from \cite[Thm.~2.2]{p}.

	\begin{Proposition}
		Suppose that $\phi:\D\rightarrow\D$ is not an automorphism and
		that $\phi$ has a fixed point $w_0$ in $\D$ such that $\phi'(w_0) \neq 0$.  If $C_{\phi}:H^2\to H^2$ is
		complex symmetric, then the essential spectral radius of $C_{\phi}$ is $0$. In other words,
		$C_{\phi}$ must be a Riesz composition operator.
	\end{Proposition}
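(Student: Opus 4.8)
The plan is to obtain this as a corollary of Proposition~\ref{PropositionKoenigs} together with the known estimates relating the essential spectral radius of a composition operator to the Hardy-class membership of its Koenigs eigenfunction. First I would record the elementary points: the hypotheses on $\phi$ --- a non-automorphism with a fixed point $w_0\in\D$ at which $\phi'(w_0)\neq 0$ --- are exactly those guaranteeing that the Koenigs eigenfunction $\kappa$ exists, and the assumption $\phi'(w_0)\neq 0$ together with the Schwarz--Pick lemma (valid because $\phi$ is not an automorphism) gives $0<|\phi'(w_0)|<1$. I would also recall that a complex symmetric (weighted) composition operator necessarily has univalent symbol, so the full strength of the Koenigs model for $\phi$ is available.

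Now suppose $C_\phi$ is complex symmetric. By Proposition~\ref{PropositionKoenigs} every power $\kappa^n$, $n\geq 1$, lies in $H^2$; equivalently $\kappa\in H^p$ for every $0<p<\infty$, so the Hardy number $h(\kappa):=\sup\{p>0:\kappa\in H^p\}$ is $+\infty$. I would then invoke \cite[Thm.~2.2]{p}, which expresses $r_e(C_\phi)$ on $H^2$ as an explicit function of $|\phi'(w_0)|$ and $h(\kappa)$ that decreases to $0$ as $h(\kappa)\to\infty$ (since $|\phi'(w_0)|<1$); evaluating it with $h(\kappa)=+\infty$ yields $r_e(C_\phi)=0$. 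As a sanity check, when $\phi(z)=az$ with $0<|a|<1$ the Koenigs map is $\kappa(z)=z\in\bigcap_{0<p<\infty}H^p$, and $C_\phi$ is diagonal in the monomial basis with eigenvalues $a^n\to 0$, hence compact --- consistent with $r_e(C_\phi)=0$.

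Finally, $r_e(C_\phi)=0$ says precisely that $C_\phi-\lambda I$ is Fredholm for every $\lambda\neq 0$; that is, $C_\phi$ is a Riesz operator, so $\sigma(C_\phi)$ is a finite set or a sequence converging to $0$ and each nonzero point of $\sigma(C_\phi)$ is an eigenvalue of finite multiplicity. This is exactly what it means to call $C_\phi$ a Riesz composition operator, which finishes the proof.

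The step that I expect to demand the most care is the invocation of \cite[Thm.~2.2]{p}: one must verify that its hypotheses (non-automorphism, interior fixed point, $\phi'(w_0)\neq 0$, and univalence if it is assumed there --- all of which we have) are satisfied, and one must confirm that its conclusion, specialized to a Koenigs map lying in every $H^p$ with $p$ finite, really forces the essential spectral radius to be $0$ rather than merely small. Everything else is either definitional or an immediate consequence of Proposition~\ref{PropositionKoenigs}.
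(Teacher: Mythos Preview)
Your proposal is correct and follows exactly the route the paper takes: use Proposition~\ref{PropositionKoenigs} to place $\kappa$ in every $H^p$, then invoke \cite[Thm.~2.2]{p} to deduce that the essential spectral radius vanishes. The paper records this as an immediate consequence of that theorem without further elaboration, so your added remarks on univalence, the Schwarz--Pick bound, and the sanity check are simply extra detail on the same argument.
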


	A good deal of work has been done to study Riesz composition
	operators on $H^{2}$.  Bourdon and Shapiro's paper \cite{bs} serves
	as an excellent starting point.  

	Suppose that $\phi$ is not an automorphism,
	$\phi(w_{0})=w_{0}$, $\phi'(w_0) \neq 0$, and that $C_{\phi}$ is $J$-symmetric.  As we
	have already observed, $J(1)$ must be a constant multiple of
	$K_{w_{0}}$. Let $\kappa$ denote the Koenigs eigenfunction for
	$\phi$, normalized so that $\|\kappa\|=1$. We also know that
	$J(\kappa)$ equals a constant multiple of $K_{w_{0}}^{(1)}$.  In
	particular, taking into account the norms of these functions, we can
	write
	\begin{equation*}
		J(1)=\frac{\gamma\,\beta(0)\,K_{w_{0}}}{\|K_{w_{0}}\|},\qquad
		J(\kappa)=\frac{\delta\,K_{w_{0}}^{(1)}}{\bigl\|K_{w_{0}}^{(1)}\bigr\|},
	\end{equation*}
	where $|\gamma|=|\delta|=1$. Since $\langle\kappa,1\rangle=\langle J(1),J(\kappa)\rangle$, we see that
	\begin{equation*}
		|\kappa(0)|=\frac{\bigl|K_{w_{0}}^{(1)}(w_{0})\bigr|}
		{\|K_{w_{0}}\|\bigl\|K_{w_{0}}^{(1)}\bigr\|}.
	\end{equation*}
	If $w_{0}=0$, then this tells us nothing.  If $w_{0}\neq 0$, however, it
	places a major restriction upon the function $\kappa$.  In essence,
	most functions in $H^2(\beta)$ cannot be Koenigs eigenfunctions for
	complex symmetric composition operators.  

%Regardless of whether this
%approach ultimately proves to be valuable, it does indicate that it
%may be uncommon for $C$-symmetric composition operators to have
%symbols with fixed points in $\D$ different from $0$.

\subsection{An instructive example}
	We conclude this note by producing a class of complex symmetric weighted composition
	operators which includes the hermitian examples obtained in \cite{CowenKo,CGK} as 
	special cases.  For each $\kappa \geq 1$, let $H^2(\beta_{\kappa})$ 
	denote the weighted Hardy space whose reproducing kernel is 
	$K_w(z) = (1 - \overline{w}z)^{-\kappa}$.
	We now explicitly characterize all weighted composition operators on $H^2(\beta_{\kappa})$ 
	which are $J$-symmetric with respect to the conjugation
	\begin{equation}\label{eq-standard}
		[Jf](z) = \overline{f(\overline{z})}
	\end{equation}
	on $H^2(\beta_{\kappa})$.  For the sake of convenience, we sometimes write $\widetilde{f}:=Jf$.

	\begin{Proposition}\label{PropositionFormula}
		A weighted composition operator $\W:H^2(\beta_{\kappa})\to H^2(\beta_{\kappa})$
		is $J$-symmetric with respect to the conjugation \eqref{eq-standard} if and only if
		\begin{equation}\label{eq-PPF}
			\psi(z) = \frac{b}{(1 - a_0 z)^{\kappa}}, \qquad \phi(z) = a_0 + \frac{a_1 z}{1 - a_0 z},
		\end{equation}
		where $a_0$ and $a_1$ are constants such that $\phi$ maps $\D$ into $\D$.  Moreover,
		such an operator is normal if and only if either,
		\begin{enumerate}\addtolength{\itemsep}{0.5\baselineskip}
			\item $b = 0$,
			\item $b \neq 0$ and $\Im a_0 \overline{a_1} = (1 - |a_0|^2) \Im a_0$.
		\end{enumerate}
		Moreover, $\W$ is hermitian if and only if $a_0$, $a_1$, and $b$ each belong to $\R$.
	\end{Proposition}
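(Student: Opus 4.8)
The plan is to transfer the relation $\W = J\W^{*}J$ onto reproducing kernels and then solve the resulting functional equation. Since $J^{2}=I$, the operator $\W$ is $J$-symmetric precisely when $\W J=J\W^{*}$. A direct computation from \eqref{eq-standard} gives $JK_{w}=K_{\overline w}$ for every $w\in\D$, because the complex conjugate of $(1-\overline w\,\overline z)^{-\kappa}$ is $(1-wz)^{-\kappa}$. Combining this with \eqref{eq-KernelAdjoint} and the density of the span of $\{K_{w}:w\in\D\}$, the relation $\W J=J\W^{*}$ is equivalent to $\W K_{\overline w}=J\bigl(\overline{\psi(w)}\,K_{\phi(w)}\bigr)=\psi(w)K_{\overline{\phi(w)}}$ for all $w\in\D$; evaluating both sides at $z\in\D$ (and using $K_{\overline a}(u)=(1-au)^{-\kappa}$) this becomes the scalar identity
\begin{equation*}
	\psi(z)\,\bigl(1-w\,\phi(z)\bigr)^{-\kappa}
	=\psi(w)\,\bigl(1-\phi(w)\,z\bigr)^{-\kappa},\qquad z,w\in\D.\tag{$\star$}
\end{equation*}

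To recover \eqref{eq-PPF}, I would first put $z=0$ in $(\star)$, which gives $\psi(w)=\psi(0)\bigl(1-w\,\phi(0)\bigr)^{-\kappa}$, i.e.\ $\psi(z)=b\,(1-a_{0}z)^{-\kappa}$ with $b:=\psi(0)$ and $a_{0}:=\phi(0)$. If $b=0$ then $\W=0$ is trivially $J$-symmetric (and one may take $a_{1}=0$), so assume $b\neq 0$. Substituting the formula for $\psi$ back into $(\star)$, cancelling $b$, and extracting principal $\kappa$-th roots near the origin (where all four factors are close to $1$), one obtains $(1-a_{0}z)\bigl(1-w\,\phi(z)\bigr)=(1-a_{0}w)\bigl(1-\phi(w)z\bigr)$ for $z,w$ near $0$, hence for all $z,w\in\D$ by analyticity. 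Writing $g:=\phi-a_{0}$ and expanding, this collapses to $z\,g(w)(1-a_{0}w)=w\,g(z)(1-a_{0}z)$, so $g(z)(1-a_{0}z)/z$ is a constant $a_{1}$ and $\phi(z)=a_{0}+a_{1}z/(1-a_{0}z)$. Since every step reverses, the converse holds too, which gives the characterization \eqref{eq-PPF}.

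For the normality claim, the key observation is that when $\W$ is $J$-symmetric one has $\W^{*}=J\W J=W_{\widetilde\phi,\widetilde\psi}$, where $\widetilde\phi(z)=\overline{\phi(\overline z)}$ and $\widetilde\psi(z)=\overline{\psi(\overline z)}$. Using $W_{\phi_{1},\psi_{1}}W_{\phi_{2},\psi_{2}}=W_{\phi_{2}\circ\phi_{1},\,\psi_{1}\cdot(\psi_{2}\circ\phi_{1})}$, the equation $\W\W^{*}=\W^{*}\W$ becomes, since $\psi$ is nonvanishing, the requirement that $W_{\widetilde\phi\circ\phi,\,\psi\cdot(\widetilde\psi\circ\phi)}$ and $W_{\phi\circ\widetilde\phi,\,\widetilde\psi\cdot(\psi\circ\widetilde\phi)}$ have the same weight and the same symbol. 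A short computation with \eqref{eq-PPF} shows that the two weights simplify to $|b|^{2}\bigl((1-|a_{0}|^{2})-cz\bigr)^{-\kappa}$ and $|b|^{2}\bigl((1-|a_{0}|^{2})-\overline c\,z\bigr)^{-\kappa}$, where $c:=a_{0}(1-|a_{0}|^{2})+\overline{a_{0}}\,a_{1}$; hence they agree exactly when $c\in\R$, that is, when $\Im a_{0}\overline{a_{1}}=(1-|a_{0}|^{2})\Im a_{0}$. It then remains to check that this same condition forces the symbols to agree: the map $\phi$ in \eqref{eq-PPF} is the linear fractional transformation with matrix $M=\minimatrix{a_{1}-a_{0}^{2}}{a_{0}}{-a_{0}}{1}$, the symbol identity reads $\overline M M=\lambda M\overline M$ for a scalar $\lambda$, comparing determinants forces $\lambda=\pm 1$, and writing $M=A+iB$ with $A,B$ real one has $\overline M M+M\overline M=2(A^{2}+B^{2})$, where $A^{2}+B^{2}$ has $(2,2)$ entry $1-|a_{0}|^{2}\neq 0$ because $a_{0}=\phi(0)\in\D$; thus $\lambda=-1$ cannot occur, while $\lambda=1$ (equivalently $[A,B]=0$) reduces, via the off-diagonal entry of $[A,B]$, to the very same condition on $a_{0}$ and $a_{1}$ (the degenerate case $a_{1}=0$, in which $\phi$ is constant, being checked directly). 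Together with the case $b=0$, this gives (i)--(ii).

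Finally, $\W$ is hermitian if and only if $\W=\W^{*}$, i.e.\ $W_{\phi,\psi}=W_{\widetilde\phi,\widetilde\psi}$; since $\psi$ is nonvanishing this forces $\psi=\widetilde\psi$ and $\phi=\widetilde\phi$, and comparing values at $0$ and extracting $\kappa$-th roots shows this occurs exactly when $a_{0},a_{1},b\in\R$. I expect the only genuinely computational part to be the normality step---reducing both the weight identity and the symbol identity to the single condition on $a_{0}$ and $a_{1}$---and there I would be especially careful about the branch of the $\kappa$-th power and about the degenerate case $\psi\equiv 0$.
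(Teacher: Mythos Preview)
Your argument for the $J$-symmetric characterization and for the hermitian case is essentially identical to the paper's: both reduce $\W J=J\W^{*}$ to the kernel identity $(\star)$, specialize one variable to $0$ to obtain $\psi$, and then solve for $\phi$; the hermitian step is likewise the same. Your extra care with the $\kappa$-th root (localizing near the origin and then extending by analyticity) is a welcome clarification of a step the paper leaves implicit.

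The normality argument, however, follows a genuinely different route. The paper tests $JWW^{*}K_{w}=WW^{*}JK_{w}$ on kernels and reduces to a single rational identity in $z,w$, from which the condition on $a_{0},a_{1}$ drops out after simplification. You instead observe that $J$-symmetry gives $\W^{*}=W_{\widetilde\phi,\widetilde\psi}$ explicitly, so normality becomes equality of two weighted composition operators; because the weights are nonvanishing, this splits into a weight identity and a symbol identity. Your weight computation yields the condition $c\in\R$ directly, and your treatment of the symbol identity via the M\"obius matrix $M$, the exclusion of $\lambda=-1$ through the $(2,2)$ entry $1-|a_{0}|^{2}>0$ of $A^{2}+B^{2}$, and the reduction of $[A,B]=0$ to the same scalar condition is correct (and one checks that the degenerate case $a_{1}=0$ also reduces to $a_{0}\in\R$, which is again $c\in\R$). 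What your approach buys is a clean structural explanation: normality is the statement that the M\"obius map $\phi$ commutes with its ``conjugate'' $\widetilde\phi$, together with a matching weight condition, and the two turn out to be the same constraint. The paper's kernel computation is shorter on the page but less illuminating as to why the weight and symbol conditions coincide.
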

	
	\begin{proof}
		To streamline our notation, we let $W:= \W$.
		A simple computation now confirms that if $\psi$ and $\phi$ are given by \eqref{eq-PPF},
		then $W J K_w = JW^*K_w$ for all $w$ in $\D$,  implying that $W = J W^* J$.  On the other hand, if
		$W = JW^*J$, then $W J K_w  = JW^* K_w$ for all $w$ in $\D$.  Since $JK_w = K_{\overline{w}}$, this implies that
		\begin{equation}\label{psiphikernel}
			\psi(z) K_{\overline{w}}(\phi(z)) = \psi(w) K_{\overline{\phi(w)}}(z)
		\end{equation}
		holds for all $z,w$ in $\D$.  Setting $w = 0$ in the preceding we find that
		\begin{equation*}
			\psi(z) = \frac{\psi(0)}{(1 - \phi(0)z)^{\kappa}}.
		\end{equation*}
		Thus $\psi$ is of the form \eqref{eq-PPF} with $b = \psi(0)$ and $a_0 = \phi(0)$.
		From \eqref{psiphikernel} it follows that
		\begin{equation*}
			\frac{1  - \phi(w)z}{1 - a_0 z} = \frac{1 - \phi(z)w}{1 - a_0 w}.
		\end{equation*}
		Writing $\phi(z) = a_0 + z \xi(z)$ where $\xi$ is analytic on $\D$, we see that
		\begin{equation*}
			(1 - a_0 z) \xi(z) = (1 - a_0 w) \xi(w)
		\end{equation*}
		for all $z,w$ in $\D$.  Thus the function $(1 - a_0 z)\xi(z)$ is constant.  Letting
		$\xi(0) = a_1$, we conclude that $\phi$ has the form \eqref{eq-PPF}.
		
		Suppose that $\psi$ and $\phi$ are given by \eqref{eq-PPF} and note that $W$ is normal if and only if
		$JWW^* K_w = WW^*J K_w$ for all $w$ in $\D$.  The preceding condition
		is equivalent to asserting that
		\begin{equation*}
			\frac{\psi(w)\widetilde{\psi}(z)}{1 - \phi(w) \widetilde{\phi}(z)} = 
			\frac{\widetilde{\psi}(w)\psi(z)}{1 - \widetilde{\phi}(w)\phi(z) }
		\end{equation*}
		holds for all $z,w$ in $\D$.  Taking the reciprocal of the preceding and simplifying,
		we see that equality holds for all $z,w$ if and only if either $b=0$ or $b \neq 0$
		and $\Im a_0 \overline{a_1} = (1 - |a_0|^2) \Im a_0$.

		We also note that $W = W^*$ if and only if $W JK_w= J W K_w$, which yields
		\begin{equation*}
			\psi(z)K_{\overline{w}}(\phi(z)) = \widetilde{\psi}(z) K_{\overline{w}}( \widetilde{\phi}(z)).
		\end{equation*}
		Setting $w = 0$ in the preceding yields $\psi(z) = \widetilde{\psi}(z)$ so that $a_0$ and $b$ are real.
		This implies that $\phi(z) = \widetilde{\phi}(z)$ whence $a_1$ is also real.  Conversely, it is
		easy to see that if $a_0$, $a_1$, and $b$ are real, then $W$ is hermitian.
	\end{proof}

	It follows from the preceding that if $a_0,a_1,b$ are chosen so that $\phi$ maps $\D$ into $\D$
	and so that (i) and (ii) both fail to hold, then the operator $\W:H^2(\beta_{\kappa}) \to H^2(\beta_{\kappa})$ will be
	complex symmetric and non-normal.  Moreover, the operators produced by Proposition
	\ref{PropositionFormula} include the hermitian examples considered in \cite{CowenKo, CGK}.
		
	\begin{Question}
		The detailed spectral structure of \emph{hermitian} weighted composition operators 
		$\W:H^2(\beta_{\kappa}) \to H^2(\beta_{\kappa})$ with $\psi$ and $\phi$ given by \eqref{eq-PPF}
		is studied in \cite{CowenKo,CGK}.  What is the corresponding spectral theory for the
		non-normal weighted composition operators arising from Proposition \ref{PropositionFormula}?
	\end{Question}

\bibliography{WWCOCS}

\end{document}